\numberwithin{equation}{section}
\theoremstyle{plain}
\newtheorem{thm}{Theorem}
\newtheorem{lem}[thm]{Lemma}
\newtheorem{letterthm}{Theorem}
\newtheorem{lettercor}[letterthm]{Corollary}
\theoremstyle{definition}
\newtheorem*{defn*}{Definition}
\newtheorem*{CRC}{Connes'\! rigidity conjecture}
\newcommand{\R}{\mathbb{R}}
\newcommand{\C}{\mathbb{C}}
\newcommand{\mult}{\operatorname{mult}}
\newcommand{\ovt}{\mathbin{\overline{\otimes}}}
\newcommand{\id}{\operatorname{id}}
\newcommand{\Prob}{\operatorname{Prob}}
\newcommand{\supp}{\operatorname{supp}}
\newcommand{\rk}{\operatorname{rk}}
\newcommand{\rB}{\operatorname{B}}
\newcommand{\rE}{\operatorname{ E}}
\newcommand{\rC}{\operatorname{C}}
\newcommand{\rL}{\operatorname{ L}}
\newcommand{\Har}{\operatorname{Har}}
\begin{document}

\title[OA characterization of the nc Poisson boundary]{Operator algebraic characterization of the noncommutative Poisson boundary}

\begin{abstract}
We obtain an operator algebraic characterization of the noncommutative Furstenberg--Poisson boundary $\rL(\Gamma) \subset \rL(\Gamma \curvearrowright B)$ associated with an admissible probability measure $\mu \in \Prob(\Gamma)$ for which the $(\Gamma, \mu)$-Furstenberg--Poisson boundary $(B, \nu_B)$ is uniquely $\mu$-stationary. This is a noncommutative generalization of Nevo--Sageev's structure theorem \cite{NS11}. We apply this result in combination with previous works to provide further evidence towards Connes'\! rigidity conjecture for higher rank lattices.
\end{abstract}

\author{Cyril Houdayer}
\address{\'Ecole Normale Sup\'erieure \\ D\'epartement de Math\'ematiques et Applications \\ Universit\'e Paris-Saclay \\ 45 rue d'Ulm \\ 75230 Paris Cedex 05 \\ France}
\email{cyril.houdayer@ens.psl.eu}
\thanks{CH is supported by ERC Advanced Grant NET 101141693}

\subjclass[2020]{20G25, 22D25, 46L10, 46L55}
\keywords{Connes'\! rigidity conjecture; Higher rank lattices; Noncommutative Furstenberg--Poisson boundaries; von Neumann algebras}

\maketitle

\section{Introduction and statement of the main results}

Let $\Gamma$ be a countable discrete group and $\mu \in \Prob(\Gamma)$ an admissible probability measure in the sense that $\bigcup_{n \geq 1} \supp(\mu)^n = \Gamma$. Denote by $(B, \nu_B)$ the $(\Gamma, \mu)$-Furstenberg--Poisson boundary \cite{Fu62, BS04}. Recall that $(B, \nu_B)$ is the unique $(\Gamma, \mu)$-space, up to isomorphism, for which the $\Gamma$-equivariant Poisson transform 
$$\mathscr P_\mu : \rL^\infty(B, \nu_B) \to \Har^\infty(\Gamma, \mu) : f \mapsto \left( \gamma \mapsto \int_B f(\gamma b) \, {\rm d}\nu_B(b) \right)$$ is onto and isometric. Here, $\Har^\infty(\Gamma, \mu) \subset \ell^\infty(\Gamma)$ denotes the subspace of bounded (right) $\mu$-harmonic functions. We say that the $(\Gamma, \mu)$-Furstenberg--Poisson boundary $(B, \nu_B)$ is $\mu$-{\em uniquely stationary} if there exists a compact metrizable model $K$ of $(B, \nu_B)$ for which there exists a unique $\mu$-stationary Borel probability measure $\nu_{K} \in \Prob(K)$ and $(K, \nu_{K}) \cong (B, \nu_B)$ as $(\Gamma, \mu)$-spaces. In \cite[Theorem 9.2]{NS11}, Nevo--Sageev showed that if the $(\Gamma, \mu)$-Furstenberg--Poisson boundary $(B, \nu_B)$ is $\mu$-uniquely stationary, then for any amenable $(\Gamma, \mu)$-space $(X, \nu_X)$, there exists a relatively measure preserving $\Gamma$-equivariant measurable factor map $(X, \nu_X) \to (B, \nu_B)$.

Denote by $M = \rL(\Gamma)$ the group von Neumann algebra and by $\mathscr B = \rL(\Gamma \curvearrowright B)$ the group measure space construction associated with the nonsingular action $\Gamma \curvearrowright (B, \nu_B)$. Then we have the natural inclusion $M \subset \mathscr B$. We also consider the conjugation action $\Gamma \curvearrowright \mathscr B$ and we denote by $\rE_{B} : \mathscr B \to \rL^\infty(B, \nu_B)$ the canonical $\Gamma$-equivariant faithful normal conditional expectation. Then the faithful normal state $\psi_B = \nu_B \circ \rE_B \in \mathscr B_\ast$ is $\mu$-stationary. Regard $\rL^2(M, \tau) \subset \rL^2(\mathscr B, \psi_B)$ as a closed subspace and simply denote by $e_M : \rL^2(\mathscr B, \psi_B) \to \rL^2(M, \tau)$ the orthogonal projection. Define the normal state $\varphi_\mu \in \rB(\ell^2(\Gamma))_\ast$ by the formula $\varphi_\mu(T) = \sum_{\gamma \in \Gamma} \mu(\gamma) \langle T \delta_\gamma, \delta_\gamma \rangle$ for $T \in \rB(\ell^2(\Gamma))$. Define the normal ucp (unital completely positive) map
$$\Phi_\mu : \rB(\ell^2(\Gamma)) \to \rB(\ell^2(\Gamma)) : T \mapsto \sum_{\gamma \in \Gamma} \mu(\gamma) J\lambda(\gamma)J  T  J \lambda(\gamma)^* J.$$
Then we have $\varphi_\mu(T) = \langle \Phi_\mu(T)\delta_e, \delta_e \rangle$ for all $T \in \rB(\ell^2(\Gamma))$. Denote by $\Har(\Phi_\mu) = \{T \in \rB(\ell^2(\Gamma)) \mid \Phi_\mu(T) = T\}$ the subspace of $\Phi_\mu$-harmonic elements. Following \cite{Iz04, DP20}, the noncommutative Poisson transform
$$\widehat{\mathscr P}_\mu : \mathscr B \to \Har(\Phi_\mu) : T \mapsto e_M T e_M$$
is a normal onto isometric ucp map such that $\widehat{\mathscr P}_\mu|_M = \id_M$. Then we refer to the inclusion $M \subset \mathscr B$ as the $(M, \varphi_\mu)$-noncommutative Furstenberg--Poisson boundary.

Let $M \subset \mathscr M$ be an inclusion of von Neumann algebras and consider the conjugation action $\Gamma \curvearrowright \mathscr M$. By \cite[Proposition 4.2]{BBHP20}, to any (faithful) normal $\mu$-stationary state $\psi \in \mathscr M_\ast$ corresponds a unique $\Gamma$-equivariant (faithful) normal ucp map $\Theta : \mathscr M \to \rL^\infty(B, \nu_B)$ such that $\nu_B \circ \Theta = \psi$. Moreover, by \cite[Proposition 2.1]{DP20}, to any (faithful) normal $\mu$-stationary state $\psi \in \mathscr M_\ast$ corresponds a unique (faithful) normal ucp map $\widehat \Theta : \mathscr M \to \mathscr B$ such that $\widehat \Theta|_{M} = \id_M$, $\psi = \nu_B \circ \rE_B \circ \widehat\Theta$ and $\Theta = \rE_B \circ \widehat \Theta$.

Whenever $\Phi : A \to B$ is a ucp map between unital $\rC^*$-algebras, we denote by $\mult(\Phi) \subset A$ the multiplicative domain of $\Phi$. Our main result is the following noncommutative generalization of Nevo--Sageev's structure theorem \cite[Theorem 9.2]{NS11}.

\begin{letterthm}\label{thm-NCNS}
Keep the same notation as above with $M = \rL(\Gamma)$ and $\mathscr B = \rL(\Gamma \curvearrowright B)$. Assume that the $(\Gamma, \mu)$-Furstenberg--Poisson boundary $(B, \nu_B)$ is $\mu$-uniquely stationary. Let $M \subset \mathscr M$ be an inclusion of von Neumann algebras. Assume that $\mathscr M$ is amenable and that there exists a faithful normal ucp map $\widehat\Theta: \mathscr M \to \mathscr B$ such that $\widehat\Theta|_M = \id_M$. 

Then $\widehat\Theta|_{\mult(\widehat\Theta)} : \mult(\widehat\Theta) \to \mathscr B$ is a unital normal onto $\ast$-isomorphism. Therefore, we may regard $\mathscr B \subset \mathscr M$ as a von Neumann subalgebra and $\widehat \Theta: \mathscr M \to \mathscr B$ as a faithful normal conditional expectation.
\end{letterthm}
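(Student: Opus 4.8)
The plan is to show that the copy of $\mathscr B$ we seek inside $\mathscr M$ is exactly the multiplicative domain $\mult(\widehat\Theta)$, with $\widehat\Theta$ restricted to it inverting a $*$-homomorphic section built from amenability. First I would record the soft reductions. Since $\widehat\Theta|_M = \id_M$, the equality case of the Kadison--Schwarz inequality gives $M \subseteq \mult(\widehat\Theta)$; as the canonical unitaries $u_\gamma \in M$ then lie in $\mult(\widehat\Theta)$, the bimodule property of the multiplicative domain yields $\widehat\Theta(u_\gamma x u_\gamma^*) = u_\gamma \widehat\Theta(x) u_\gamma^*$, so $\widehat\Theta$ is automatically $\Gamma$-equivariant. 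Consequently $\widehat\Theta|_{\mult(\widehat\Theta)}$ is a normal $\Gamma$-equivariant $\ast$-homomorphism into $\mathscr B$, injective because $\widehat\Theta$ is faithful. Everything therefore reduces to surjectivity, and since $M$ maps onto $M$ and $\mathscr B = (\rL^\infty(B,\nu_B) \cup M)''$, it suffices to produce a copy of $\rL^\infty(B,\nu_B)$ inside $\mult(\widehat\Theta)$ mapping onto $\rL^\infty(B,\nu_B) \subseteq \mathscr B$.

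Next I would set $\psi = \psi_B \circ \widehat\Theta$, a faithful normal $\mu$-stationary state on $\mathscr M$, and $\Theta = \rE_B \circ \widehat\Theta \colon \mathscr M \to \rL^\infty(B,\nu_B)$, the associated faithful $\Gamma$-equivariant ucp boundary map with $\nu_B \circ \Theta = \psi$. The crucial input from amenability is the construction of a $\Gamma$-equivariant ucp map $\xi \colon C(K) \to \mathscr M$ in the opposite direction, where $K$ is the compact metrizable uniquely stationary model of $(B,\nu_B)$. Because $\mathscr M$ is injective and the boundary action $\Gamma \curvearrowright (B,\nu_B)$ is amenable, one should obtain $\xi$ by producing approximately equivariant ucp maps into $\mathscr M$ (using a net $K \to \Prob(\Gamma)$ witnessing amenability of the action, fed through a ucp projection onto the injective $\mathscr M$) and extracting a limit point in the point-weak-$\ast$ topology. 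I expect this construction of the equivariant section to be the main obstacle: it is precisely here that injectivity of $\mathscr M$ and amenability of the action must be combined, and upgrading approximate equivariance to exact equivariance is the delicate point.

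With $\xi$ in hand, unique stationarity enters decisively. The state $\psi \circ \xi$ is $\mu$-stationary on $C(K)$, so by unique stationarity of the model it equals $\nu_K$; hence $P = \Theta \circ \xi \colon C(K) \to \rL^\infty(B,\nu_B)$ is a $\Gamma$-equivariant ucp map with $\nu_B \circ P = \nu_B$. The heart of the argument, mirroring Nevo--Sageev's rigidity, is to show that $P$ coincides with the tautological inclusion $\iota \colon C(K) \hookrightarrow \rL^\infty(B,\nu_B)$; I would deduce this from unique stationarity together with the uniqueness of boundary maps in \cite[Proposition 4.2]{BBHP20} applied to $(B,\nu_B)$ with its stationary state $\nu_B$. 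Granting $P = \iota$, a double Kadison--Schwarz sandwich finishes the local analysis: for $f \in C(K)$ one has $f^*f = P(f^*f) \geq \Theta(\xi(f)^*\xi(f)) \geq P(f)^*P(f) = f^*f$, so faithfulness of $\Theta$ forces $\xi(f^*f) = \xi(f)^*\xi(f)$, i.e.\ $\xi$ is a $\ast$-homomorphism; applying $\rE_B$ to $\widehat\Theta(\xi(f)^*\xi(f)) \geq \widehat\Theta(\xi(f))^*\widehat\Theta(\xi(f))$ and invoking faithfulness of $\rE_B$ then yields both $\xi(f) \in \mult(\widehat\Theta)$ and $\widehat\Theta(\xi(f)) = f \in \rL^\infty(B,\nu_B)$.

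Finally I would globalize. Setting $D = \xi(C(K))'' \subseteq \mult(\widehat\Theta)$ and using that $\widehat\Theta|_{\mult(\widehat\Theta)}$ is a normal $\ast$-homomorphism, we get $\widehat\Theta(D) = \rL^\infty(B,\nu_B)$; together with $M \subseteq \mult(\widehat\Theta)$, the image $\widehat\Theta(\mult(\widehat\Theta))$ is a weakly closed $\ast$-subalgebra of $\mathscr B$ containing $\rL^\infty(B,\nu_B) \cup M$, hence all of $\mathscr B$. This establishes surjectivity, so $\widehat\Theta|_{\mult(\widehat\Theta)} \colon \mult(\widehat\Theta) \to \mathscr B$ is a unital normal onto $\ast$-isomorphism. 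Identifying $\mathscr B$ with $\mult(\widehat\Theta) \subseteq \mathscr M$ through its inverse, the bimodule property of the multiplicative domain shows that $\widehat\Theta$ becomes a faithful normal conditional expectation $\mathscr M \to \mathscr B$, as claimed. The two places demanding genuine work are thus the amenability-based construction of $\xi$ and the stationarity-rigidity identification $P = \iota$.
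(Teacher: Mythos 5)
Your skeleton is the paper's: reduce everything to producing a $\Gamma$-equivariant copy of $\rL^\infty(B,\nu_B)$ inside $\mult(\widehat\Theta)$, build an equivariant ucp section $\xi : \rC(K) \to \mathscr M$, identify $\Theta\circ\xi$ with the tautological inclusion via unique stationarity, and run Kadison--Schwarz equality arguments to conclude; indeed your ``sandwich'' and globalization steps are correct and are essentially the paper's Lemmas \ref{lem-useful1} and \ref{lem-useful2} unpacked. The genuine gap is exactly the step you flag as the main obstacle: the construction of $\xi$ is never carried out, and the route you sketch for it is misdirected. Amenability of the action $\Gamma \actson (B,\nu_B)$ provides (approximately equivariant) maps $K \to \Prob(\Gamma)$, and these go the \emph{wrong way}: they do not yield ucp maps $\rC(K) \to \mathscr M$ when ``fed through a ucp projection onto $\mathscr M$,'' and your sketch never specifies what the approximating maps are, so there is nothing to pass to the limit with. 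In fact, amenability of the action is not needed at all, and no approximation argument is needed either. The paper's construction is exact and soft: pick a point $b \in B$ and take the orbit map $\theta : \rC(B) \to \ell^\infty(\Gamma)$, $F \mapsto (\gamma \mapsto F(\gamma b))$, which intertwines the $\Gamma$-action on $\rC(B)$ with conjugation by the unitaries $\lambda(\gamma)$ on $\ell^\infty(\Gamma) \subset \rB(\ell^2(\Gamma))$; then represent $\rL^2(\mathscr M)$ as a left $M$-module to obtain (via \cite[Proposition 8.2.3]{AP14}) a normal ucp $M$-bimodule map $\pi : \rB(\rL^2(M)) \to \rB(\rL^2(\mathscr M))$; finally use injectivity of $\mathscr M$ to get a (possibly non-normal) conditional expectation $\rE : \rB(\rL^2(\mathscr M)) \to \mathscr M$. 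The composition $\iota = \rE\circ\pi\circ\theta$ is then \emph{exactly} $\Gamma$-equivariant for the conjugation action, because $\rE\circ\pi$ is an $M$-bimodule map:
$$\iota(F\circ\gamma^{-1}) = \rE\bigl(\pi\bigl(\lambda(\gamma)\theta(F)\lambda(\gamma)^*\bigr)\bigr) = \lambda(\gamma)\,\iota(F)\,\lambda(\gamma)^*.$$
This bimodule mechanism, which converts injectivity of $\mathscr M$ into exact equivariance, is the missing idea in your proposal; without it (or a worked-out substitute) the proof does not go through.

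A secondary, fixable issue concerns the identification $P = \Theta\circ\xi = \iota_0$ with the tautological inclusion. You invoke \cite[Proposition 4.2]{BBHP20}, but that statement is about \emph{normal} equivariant ucp maps defined on von Neumann algebras, whereas $P$ is defined on $\rC(K)$ and is not normal a priori (since $\xi$ is built from a non-normal expectation); to apply it you would first need to extend $P$ to a normal equivariant ucp map on $\rL^\infty(K,\nu_K)$, which requires an extra disintegration argument using $\nu_B\circ P = \nu_K$. The paper instead quotes the result tailor-made for this situation (\cite[Corollary VI.2.10]{Ma91} and \cite[Theorem 3.4]{HK21}): when $(B,\nu_B)$ is $\mu$-uniquely stationary and $B = \supp(\nu_B)$, the identity is the \emph{unique} $\Gamma$-equivariant ucp map $\rC(B) \to \rL^\infty(B,\nu_B)$, which applies verbatim and handles in one stroke both the stationarity of the barycenter measure and the rigidity of the map.
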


In particular, letting $\psi = \nu_B \circ \rE_B \circ \widehat \Theta \in \mathscr M_\ast$ and $\psi_B = \nu_B \circ \rE_B \in \mathscr B_\ast$, it follows from \cite[Theorem A]{Zh23} and \cite{KV82} that the inclusion $M \subset \mathscr M$ has maximal $\varphi_\mu$-entropy in the sense of \cite{DP20}, that is, $$h_{\varphi_\mu}(M \subset \mathscr M, \psi) = h_{\varphi_\mu}(M \subset \mathscr B, \psi_B) = h(\mu) = \lim_n \frac{1}{n} H(\mu^{\ast n}).$$

When $\Gamma$ is moreover an infinite icc group (infinite conjugacy classes), we deduce the following operator algebraic characterization of the $(M, \varphi_\mu)$-noncommutative Furstenberg--Poisson boundary $M \subset \mathscr B$.

\begin{lettercor}\label{cor-NCNS}
Keep the same notation as above with $M = \rL(\Gamma)$ and $\mathscr B = \rL(\Gamma \curvearrowright B)$. Assume that $\Gamma$ is infinite icc and that the $(\Gamma, \mu)$-Furstenberg--Poisson boundary $(B, \nu_B)$ is $\mu$-uniquely stationary. Let $M \subset \mathscr M$ be an inclusion of von Neumann algebras satisfying the following conditions:
\begin{itemize}
\item [$(\rm i)$] $M' \cap \mathscr M = \C1$.
\item [$(\rm ii)$] There exists a normal ucp map $\widehat \Theta : \mathscr M \to \mathscr B$ such that $\widehat\Theta|_{M} = \id_{M}$.
\item [$(\rm iii)$] $\mathscr M$ is amenable.
\item [$(\rm iv)$] Whenever $M \subset \mathscr N \subset \mathscr M$ is an intermediate von Neumann subalgebra for which there exists a normal conditional expectation $\rE : \mathscr M \to \mathscr N$, we have $\mathscr N = \mathscr M$.
\end{itemize}

Then $\widehat \Theta : \mathscr M \to \mathscr B$ is a unital normal onto $\ast$-isomorphism.
\end{lettercor}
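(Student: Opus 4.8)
The plan is to deduce Corollary~\ref{cor-NCNS} from Theorem~\ref{thm-NCNS}. The only hypothesis of Theorem~\ref{thm-NCNS} that is not directly assumed here is that $\widehat\Theta$ be \emph{faithful}, so the first and main task is to upgrade condition $(\rm ii)$ to faithfulness using the irreducibility condition $(\rm i)$. Once faithfulness is in hand, Theorem~\ref{thm-NCNS} identifies $\mathscr B$ with a von Neumann subalgebra $\mult(\widehat\Theta) \subset \mathscr M$ onto which $\widehat\Theta$ is a faithful normal conditional expectation, and the minimality condition $(\rm iv)$ then forces $\mathscr B = \mathscr M$.

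For the faithfulness step, first observe that since $\widehat\Theta|_M = \id_M$ is a $\ast$-homomorphism, we have $M \subset \mult(\widehat\Theta)$, so the bimodule property $\widehat\Theta(u_\gamma\, x\, u_\gamma^*) = u_\gamma\, \widehat\Theta(x)\, u_\gamma^*$ holds for all $\gamma \in \Gamma$ and $x \in \mathscr M$, where $(u_\gamma)_{\gamma \in \Gamma}$ are the canonical unitaries of $M = \rL(\Gamma)$. Thus $\widehat\Theta$ is $\Gamma$-equivariant for the conjugation actions, and since $\rE_B$ is $\Gamma$-equivariant and $\nu_B$ is $\mu$-stationary, the normal state $\psi = \nu_B \circ \rE_B \circ \widehat\Theta \in \mathscr M_\ast$ is $\mu$-stationary. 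Write $\sigma_\gamma = \Ad(u_\gamma)$ and let $z \in \mathscr M$ be the support projection of $\psi$. Using that the support of $\psi \circ \sigma_\gamma$ equals $\sigma_\gamma^{-1}(z) = u_\gamma^* z u_\gamma$, together with the fact that the support of a countable sum of normal positive functionals is the supremum of their supports, the stationarity relation $\psi = \sum_{\gamma} \mu(\gamma)\, \psi \circ \sigma_\gamma$ yields $z = \bigvee_{\gamma \in \supp(\mu)} u_\gamma^* z u_\gamma$. In particular $z \geq u_\gamma^* z u_\gamma$ for every $\gamma \in \supp(\mu)$; iterating along products and using admissibility $\bigcup_{n} \supp(\mu)^n = \Gamma$ gives $z \geq u_\gamma^* z u_\gamma$ for all $\gamma \in \Gamma$, and applying this to $\gamma$ and to $\gamma^{-1}$ forces $u_\gamma^* z u_\gamma = z$. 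Hence $z$ commutes with every $u_\gamma$, so $z \in M' \cap \mathscr M = \C1$ by $(\rm i)$; as $\psi(z) = 1$ we get $z = 1$, i.e. $\psi$ is faithful, and since $\nu_B \circ \rE_B$ is faithful on $\mathscr B$ it follows that $\widehat\Theta$ is faithful.

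With $\widehat\Theta$ now faithful, normal, ucp, satisfying $\widehat\Theta|_M = \id_M$, and with $\mathscr M$ amenable by $(\rm iii)$, Theorem~\ref{thm-NCNS} applies: $\widehat\Theta|_{\mult(\widehat\Theta)} : \mult(\widehat\Theta) \to \mathscr B$ is a unital normal onto $\ast$-isomorphism, and identifying $\mathscr B \cong \mult(\widehat\Theta)$ we may regard $M \subset \mathscr B \subset \mathscr M$ with $\widehat\Theta : \mathscr M \to \mathscr B$ a faithful normal conditional expectation. Thus $\mathscr N = \mathscr B$ is an intermediate von Neumann subalgebra admitting the normal conditional expectation $\widehat\Theta$, so condition $(\rm iv)$ forces $\mathscr B = \mathscr M$. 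Consequently $\mult(\widehat\Theta) = \mathscr M$ and $\widehat\Theta : \mathscr M \to \mathscr B$ is a unital normal onto $\ast$-isomorphism, as desired.

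The main obstacle is the faithfulness step, where the $\mu$-stationarity of $\psi$, the admissibility of $\mu$, and the irreducibility $(\rm i)$ must be combined to trap the support projection $z$ inside the trivial relative commutant; the remaining steps are a direct invocation of Theorem~\ref{thm-NCNS} followed by the minimality hypothesis $(\rm iv)$. Note that the standing assumption that $\Gamma$ be infinite icc guarantees in particular that $M = \rL(\Gamma)$ is a $\mathrm{II}_1$ factor, so that $(\rm i)$ expresses irreducibility of the inclusion $M \subset \mathscr M$.
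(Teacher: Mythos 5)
Your proof is correct, and its overall skeleton is exactly that of the paper: upgrade condition $(\rm ii)$ to faithfulness using $(\rm i)$, invoke Theorem~\ref{thm-NCNS}, then apply $(\rm iv)$ to the intermediate subalgebra $\mult(\widehat\Theta) \cong \mathscr B$. The difference lies in how faithfulness is established. The paper works directly with the support projection $p$ of the ucp map $\widehat\Theta$ itself: since $M \subset \mult(\widehat\Theta)$, one has $\widehat\Theta(\lambda(\gamma) p \lambda(\gamma)^*) = \lambda(\gamma)\widehat\Theta(p)\lambda(\gamma)^* = 1$, so minimality of the support gives $p \leq \lambda(\gamma) p \lambda(\gamma)^*$ for \emph{every} $\gamma \in \Gamma$ at once; applying this to $\gamma$ and $\gamma^{-1}$ yields $p \in M' \cap \mathscr M = \C 1$, hence $p = 1$. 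This is a two-line argument that uses neither $\mu$-stationarity nor admissibility. You instead pass to the state $\psi = \nu_B \circ \rE_B \circ \widehat\Theta$, verify its $\mu$-stationarity via equivariance of $\widehat\Theta$, and combine the support formula for countable sums of normal positive functionals with the admissibility hypothesis $\bigcup_n \supp(\mu)^n = \Gamma$ to trap the support projection $z$ of $\psi$ in $M' \cap \mathscr M$. Note that since $\psi_B = \nu_B \circ \rE_B$ is faithful, your $z$ \emph{is} the support projection of $\widehat\Theta$, so the two arguments concern the same projection; yours reaches its $\Gamma$-invariance through the stationarity machinery, where the paper needs only the $M$-bimodule property of $\widehat\Theta$. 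Both are valid; the paper's route is more economical and makes clear that this step of the corollary does not actually depend on the random-walk structure, only on covariance and irreducibility. (One incidental merit of your write-up: you correctly cite $(\rm ii)$ for the existence of $\widehat\Theta$ and $(\rm iii)$ for amenability, whereas the paper's proof contains a harmless slip, attributing the ucp map to Condition $(\rm iii)$.)
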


Let us point out that the $(M, \varphi_\mu)$-noncommutative Furstenberg--Poisson boundary $M \subset \mathscr B$ satisfies Conditions $(\rm i)$, $(\rm ii)$, $(\rm iii)$, $(\rm iv)$ by \cite{DP20}. Thus, Corollary \ref{cor-NCNS} gives an abstract operator algebraic characterization of the inclusion $M \subset \mathscr B$. 

We also apply Theorem \ref{thm-NCNS} to provide further evidence towards Connes'\! rigidity conjecture for higher rank lattices. 

\begin{CRC}
For every $i \in \{1, 2\}$, let $G_i$ be a semisimple connected real Lie group with trivial center, no compact factor such that $\rk_{\R}(G_i) \geq 2$ and let $\Gamma_i < G_i$ be an irreducible lattice. If $\rL(\Gamma_1) \cong \rL(\Gamma_2)$, then $G_1 \cong G_2$ and in particular $\rk_{\R}(G_1) = \rk_{\R}(G_2)$.
\end{CRC}

For every $i \in \{1, 2\}$, choose a Furstenberg measure $\mu_i \in \Prob(\Gamma_i)$ so that $(G_i/P_i, \nu_{P_i})$ is the $(\Gamma_i, \mu_i)$-Furstenberg--Poisson boundary \cite{Fu67}. Here, $\nu_{P_i} \in \Prob(G_i/P_i)$ denotes the unique $K_i$-invariant Borel probability measure, where $K_i < G_i$ is a maximal compact subgroup and $G_i = K_i P_i$. It is well-known that $(G_i/P_i, \nu_{P_i})$ is $\mu_i$-uniquely stationary \cite{GM89}. Set $M_i = \rL(\Gamma_i)$ and $\mathscr B_i = \rL(\Gamma_i \curvearrowright G_i/P_i)$.

Assume that $M_1 \cong M_2$ and set $M = M_1 = M_2$. Since $\mathscr B_2$ is an amenable (hence injective) von Neumann algebra and since $M \subset \mathscr B_1$, Arveson's extension theorem implies that there exists a ucp map $\Phi : \mathscr B_1 \to \mathscr B_2$ such that $\Phi|_M = \id_M$. We show that if $\Phi : \mathscr B_1 \to \mathscr B_2$ is normal, then $\rk_\R(G_1) = \rk_\R(G_2)$. 

\begin{letterthm}\label{thm-CRC}
Keep the same notation as above. Assume that there exists a normal ucp map $\Phi : \mathscr B_1 \to \mathscr B_2$ such that $\Phi|_M = \id_M$. Then $\Phi : \mathscr B_1 \to \mathscr B_2$ is a unital normal onto $\ast$-isomorphism. In particular, we have 
$$\rk_\R(G_1) = \rk_\R(G_2).$$
\end{letterthm}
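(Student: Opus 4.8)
The strategy is to reduce Theorem~\ref{thm-CRC} to an application of Theorem~\ref{thm-NCNS}. The given data is a normal ucp map $\Phi : \mathscr B_1 \to \mathscr B_2$ with $\Phi|_M = \id_M$, and we want to upgrade $\Phi$ to a $\ast$-isomorphism. The key observation is that $\mathscr B_2$ plays the role of the ambient algebra $\mathscr M$ (it is amenable, being a group measure space construction over the amenable action $\Gamma_2 \curvearrowright G_2/P_2$), while $\mathscr B_1$ plays the role of the boundary $\mathscr B$. Thus I would first verify that the hypotheses of Theorem~\ref{thm-NCNS} are met \emph{with the roles of the two boundaries exchanged}: namely, view $M \subset \mathscr B_2$ as the inclusion $M \subset \mathscr M$, with boundary $\mathscr B_1$, equipped with a faithful normal ucp map $\mathscr B_2 \to \mathscr B_1$ fixing $M$ pointwise. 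By the unique stationarity of $(G_1/P_1, \nu_{P_1})$ and Proposition~2.1 of \cite{DP20}, the existence of such a map fixing $M$ is guaranteed once we know $\mathscr B_1$ is the $(M, \varphi_{\mu_1})$-noncommutative boundary. The main subtlety is that the given $\Phi$ goes \emph{from} $\mathscr B_1$, so I would produce the required reverse map by composing $\Phi$ with the canonical boundary structure, or symmetrically, by running Theorem~\ref{thm-NCNS} in the direction that matches $\Phi$.

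Concretely, I would proceed as follows. First, note $\mathscr B_1$ is amenable and $M \subset \mathscr B_1$ is the noncommutative Furstenberg--Poisson boundary, which is $\mu_1$-uniquely stationary by \cite{GM89}. I would apply Theorem~\ref{thm-NCNS} with $\mathscr B = \mathscr B_1$ and $\mathscr M = \mathscr B_2$: one must exhibit a faithful normal ucp map $\widehat\Theta : \mathscr B_2 \to \mathscr B_1$ with $\widehat\Theta|_M = \id_M$. Such a map exists because $\mathscr B_2$ carries a faithful normal $\mu_1$-stationary state (pulled back via the boundary structure on $M$, using that $\mathscr B_1$ is uniquely stationary), and \cite[Proposition 2.1]{DP20} converts this into the desired $\widehat\Theta$ fixing $M$. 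Faithfulness follows since the relevant stationary state is faithful. Then Theorem~\ref{thm-NCNS} yields that $\widehat\Theta|_{\mult(\widehat\Theta)}$ is a normal $\ast$-isomorphism onto $\mathscr B_1$, and $\mathscr B_1$ embeds as a von Neumann subalgebra of $\mathscr B_2$ with $\widehat\Theta$ a conditional expectation.

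Symmetrically, swapping the indices $1 \leftrightarrow 2$ and using the given $\Phi : \mathscr B_1 \to \mathscr B_2$ (which is normal, ucp, and fixes $M$) together with the amenability of $\mathscr B_2$ and unique stationarity of $(G_2/P_2,\nu_{P_2})$, Theorem~\ref{thm-NCNS} applied with $\mathscr B = \mathscr B_2$ and $\mathscr M = \mathscr B_1$ gives that $\mathscr B_2$ embeds in $\mathscr B_1$ as a von Neumann subalgebra, with $\Phi$ restricting to a $\ast$-isomorphism on its multiplicative domain. Combining the two embeddings $\mathscr B_1 \hookrightarrow \mathscr B_2$ and $\mathscr B_2 \hookrightarrow \mathscr B_1$, both fixing $M$ and realized by conditional expectations, I would argue that each composition is the identity: the two maps are mutually inverse $\ast$-isomorphisms, forcing $\Phi : \mathscr B_1 \to \mathscr B_2$ itself to be onto, whence a unital normal $\ast$-isomorphism.

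Finally, once $\mathscr B_1 \cong \mathscr B_2$ as von Neumann algebras over $M$, the equality of real ranks follows from a known invariant of these boundary inclusions. Specifically, $\rk_\R(G_i)$ is recovered from the inclusion $M \subset \mathscr B_i$ via the entropy or, more robustly, via the Poisson boundary structure whose dimension-type data encodes $\rk_\R(G_i)$; the isomorphism $\mathscr B_1 \cong \mathscr B_2$ fixing $M$ transports this invariant, giving $\rk_\R(G_1) = \rk_\R(G_2)$. The hard part will be establishing the existence and faithfulness of the reverse ucp map $\widehat\Theta : \mathscr B_2 \to \mathscr B_1$ fixing $M$ — in other words, certifying that the hypotheses of Theorem~\ref{thm-NCNS} genuinely hold in the exchanged configuration — since the given data only supplies a map in one direction, and one must leverage unique stationarity to produce the other.
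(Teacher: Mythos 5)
Your plan diverges from the paper's proof in a way that creates real gaps. The paper's argument is \emph{one-directional} and needs no reverse map: it applies Corollary~\ref{cor-NCNS} to the inclusion $M \subset \mathscr B_1$, taking $\mathscr M = \mathscr B_1$ as the ambient algebra and $\mathscr B_2$ as the target boundary, with the given $\Phi$ serving as condition $(\rm ii)$. The remaining conditions are properties of the inclusion $M \subset \mathscr B_1$ established in \cite{DP20}: $M' \cap \mathscr B_1 = \C 1$ since $M$ is a ${\rm II_1}$ factor (Proposition~2.7 there), $\mathscr B_1$ is amenable, and --- crucially --- every intermediate von Neumann subalgebra $M \subset \mathscr N \subset \mathscr B_1$ admitting a normal conditional expectation equals $\mathscr B_1$ (Theorem~4.1 there). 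It is this last rigidity property, which your proposal never invokes, that forces surjectivity: Theorem~\ref{thm-NCNS} realizes $\mathscr B_2$ as an intermediate subalgebra of $M \subset \mathscr B_1$ with normal expectation $\Phi$, and the rigidity then gives $\mathscr B_2 = \mathscr B_1$, i.e.\ $\Phi$ is onto. The rank equality then follows from the noncommutative factor theorems \cite[Corollary F]{Ho21} and \cite[Theorem B]{BH22}, not from any entropy or ``dimension-type'' invariant as you vaguely suggest.

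Concretely, your proposal has three gaps. First, the reverse map $\widehat\Theta : \mathscr B_2 \to \mathscr B_1$: to apply \cite[Proposition 2.1]{DP20} you would need a \emph{faithful normal} $\mu_1$-stationary state on $\mathscr B_2$, and ``pulled back via the boundary structure on $M$'' is not a construction --- there is no normal conditional expectation from the type ${\rm III}$ algebra $\mathscr B_2$ onto the ${\rm II_1}$ factor $M$, and unique stationarity of $(G_1/P_1, \nu_{P_1})$ says nothing about normal states on $\mathscr B_2$. As the paper emphasizes before Theorem~\ref{thm-CRC}, Arveson's extension theorem only produces possibly non-normal ucp maps; normality is precisely the hypothesis one cannot manufacture, and a normal reverse map would (by the theorem itself) be an isomorphism, so its existence is essentially equivalent to what you are trying to prove. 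Second, even your correct-direction application of Theorem~\ref{thm-NCNS} (with $\mathscr B = \mathscr B_2$, $\mathscr M = \mathscr B_1$) requires $\Phi$ to be \emph{faithful}, which you never verify; this is exactly what Corollary~\ref{cor-NCNS} adds over Theorem~\ref{thm-NCNS}, via the support projection argument: $\widehat\Theta(\lambda(\gamma) p \lambda(\gamma)^*) = 1$ forces $p \in M' \cap \mathscr B_1 = \C 1$, using that $\Gamma$ is icc. Third, your final step --- combining embeddings $\mathscr B_1 \hookrightarrow \mathscr B_2$ and $\mathscr B_2 \hookrightarrow \mathscr B_1$ and declaring the compositions to be the identity --- does not follow; there is no Schr\"oder--Bernstein theorem for von Neumann algebras, and the only mechanism available to close such an argument is again \cite[Theorem 4.1]{DP20}, at which point the reverse direction is superfluous.
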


\subsection*{Acknowledgments} I am grateful to Amos Nevo for thought provoking discussions during the IHP trimester {\em Group actions and rigidity:\! around the Zimmer program} and for pointing out the reference \cite{NS11}.

\section{Proofs of the main results}

We record the following well-known fact on ucp maps.

\begin{lem}\label{lem-useful1}
Let $\mathfrak A, \mathfrak B, \mathfrak C$ be unital $\rC^*$-algebras and $\Phi : \mathfrak A \to \mathfrak B$, $\Psi : \mathfrak B \to \mathfrak C$ be ucp maps. Assume that $\Psi$ is faithful and that $\Psi \circ \Phi$ is a unital $\ast$-homomorphism. Then $\Phi$ is a unital $\ast$-homomorphism and $\Phi(\mathfrak A) \subset \mult(\Psi)$.
\end{lem}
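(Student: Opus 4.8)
The plan is to reduce everything to the Kadison--Schwarz inequality together with Choi's characterization of the multiplicative domain: for a ucp map $\Phi$, one has $a \in \mult(\Phi)$ if and only if $\Phi(a^*a) = \Phi(a)^*\Phi(a)$ and $\Phi(aa^*) = \Phi(a)\Phi(a)^*$. Since ucp maps are $*$-preserving, it in fact suffices to verify the single equality $\Phi(a^*a) = \Phi(a)^*\Phi(a)$ for all $a$, because replacing $a$ by $a^*$ then yields the second equality automatically. So the whole lemma comes down to establishing $\Phi(a^*a) = \Phi(a)^*\Phi(a)$ for every $a \in \mathfrak A$.

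The key step is a squeeze argument. Fix $a \in \mathfrak A$. On the one hand, Kadison--Schwarz for $\Phi$ gives $\Phi(a^*a) - \Phi(a)^*\Phi(a) \geq 0$; applying the positive map $\Psi$ yields $\Psi(\Phi(a^*a)) \geq \Psi(\Phi(a)^*\Phi(a))$. On the other hand, Kadison--Schwarz for $\Psi$ applied to the element $\Phi(a)$ gives $\Psi(\Phi(a)^*\Phi(a)) \geq \Psi(\Phi(a))^*\Psi(\Phi(a))$. Finally, because $\Psi \circ \Phi$ is a unital $*$-homomorphism, $\Psi(\Phi(a^*a)) = (\Psi\circ\Phi)(a)^*(\Psi\circ\Phi)(a) = \Psi(\Phi(a))^*\Psi(\Phi(a))$. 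Chaining these three relations, both ends coincide, so all the inequalities are equalities. In particular $\Psi(\Phi(a^*a) - \Phi(a)^*\Phi(a)) = 0$ with $\Phi(a^*a) - \Phi(a)^*\Phi(a) \geq 0$, and faithfulness of $\Psi$ forces $\Phi(a^*a) = \Phi(a)^*\Phi(a)$. As noted, this gives $\mult(\Phi) = \mathfrak A$, so $\Phi$ is a unital $*$-homomorphism.

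The second assertion falls out of the same equalities. For $b = \Phi(a) \in \Phi(\mathfrak A)$, the middle inequality above is now an equality, i.e. $\Psi(b^*b) = \Psi(b)^*\Psi(b)$. Running the same observation with $b^* = \Phi(a^*) \in \Phi(\mathfrak A)$ gives $\Psi(bb^*) = \Psi(b)\Psi(b)^*$, whence $b \in \mult(\Psi)$ by Choi's criterion; thus $\Phi(\mathfrak A) \subset \mult(\Psi)$. I expect the only delicate point to be keeping the directions of the two Kadison--Schwarz inequalities and the homomorphism identity mutually consistent so that the squeeze actually closes; once that is arranged, the faithfulness of $\Psi$ does the rest, and no approximation or normality hypothesis is needed since the statement is purely $\rC^*$-algebraic.
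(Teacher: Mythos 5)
Your proposal is correct and follows essentially the same route as the paper: both proofs chain the Kadison--Schwarz inequality for $\Phi$ (pushed through the positive map $\Psi$), the Kadison--Schwarz inequality for $\Psi$ at $\Phi(a)$, and the homomorphism identity for $\Psi \circ \Phi$ into a squeeze, then invoke faithfulness of $\Psi$ and Choi's multiplicative domain criterion. Your treatment is slightly more explicit than the paper's in checking both halves of Choi's criterion (via the substitution $a \mapsto a^*$), but the underlying argument is identical.
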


\begin{proof}
By Kadison's inequality and since $\Psi \circ \Phi$ is a unital $\ast$-homomorphism, for every $a \in \mathfrak A$, we have
$$\Psi(\Phi(a^*a)) = \Psi(\Phi(a^*))\Psi(\Phi(a)) \leq \Psi(\Phi(a)^* \Phi(a)) \leq \Psi(\Phi(a^*a)).$$
Since $\Psi$ is faithful, it follows that $\Phi(a)^* \Phi(a) = \Phi(a^*a)$. Then $\mult(\Phi) = \mathfrak A$ and so $\Phi$ is a unital $\ast$-homomorphism. Moreover, for every $a \in \mathfrak A$, we have 
$$\Psi(\Phi(a)^*)\Psi(\Phi(a)) = \Psi(\Phi(a)^* \Phi(a)).$$
This further implies that $\Phi(\mathfrak A) \subset \mult(\Psi)$.
\end{proof}

Let $\Gamma$ be a countable discrete group and $\mu \in \Prob(\Gamma)$ an admissible probability measure. Denote by $(B, \nu_B)$ the $(\Gamma, \mu)$-Furstenberg--Poisson boundary. Set $M = \rL(\Gamma)$ and $\mathscr B = \rL(\Gamma \curvearrowright B)$. Denote by $\rE_{B} : \mathscr B \to \rL^\infty(B, \nu_B)$ the canonical $\Gamma$-equivariant faithful normal conditional expectation.

Let $M \subset \mathscr M$ be an inclusion of von Neumann algebras. Let $\widehat\Theta: \mathscr M \to \mathscr B$ be a faithful normal ucp map such that $\widehat\Theta|_M = \id_M$. Then $\Theta = \rE_B \circ \widehat \Theta : \mathscr M \to \rL^\infty(B, \nu_B)$ is a $\Gamma$-equivariant faithful normal ucp map.

\begin{lem}\label{lem-useful2}
Keep the same notation as above. We have 
$$\Theta(\mult(\Theta)) \rtimes \Gamma \subset \widehat\Theta(\mult(\widehat\Theta)) \subset \rL^\infty(B, \nu_B) \rtimes \Gamma =\mathscr B.$$
In particular, if $\Theta(\mult(\Theta)) = \rL^\infty(B, \nu_B)$, then $\widehat\Theta(\mult(\widehat\Theta)) = \mathscr B$. In that case, we may regard $\mathscr B \subset \mathscr M$ as a von Neumann subalgebra and $\widehat \Theta : \mathscr M \to \mathscr B$ as a faithful normal conditional expectation.
\end{lem}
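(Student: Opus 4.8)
The plan is to run everything through Lemma~\ref{lem-useful1} applied to the factorization $\Theta = \rE_B \circ \widehat\Theta$, restricted to the multiplicative domain of $\Theta$. The first preliminary I would record is that the multiplicative domain of the canonical conditional expectation is exactly the base algebra, i.e. $\mult(\rE_B) = \rL^\infty(B, \nu_B)$. One inclusion is clear since $\rE_B$ restricts to the identity on $\rL^\infty(B, \nu_B)$; for the converse I would expand $x = \sum_{\gamma} a_\gamma \lambda(\gamma)$ into its Fourier coefficients $a_\gamma \in \rL^\infty(B,\nu_B)$ and use that $\rE_B(x^*x) = \sum_\gamma \sigma_{\gamma^{-1}}(a_\gamma^* a_\gamma)$, where $\sigma$ denotes the $\Gamma$-action; equating this with $\rE_B(x)^* \rE_B(x) = a_e^* a_e$ and invoking positivity of each summand together with faithfulness forces $a_\gamma = 0$ for $\gamma \neq e$. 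I would also note at the outset that $M \subset \mult(\widehat\Theta)$ (since $\widehat\Theta|_M = \id_M$ is multiplicative), that the conjugation action $\Gamma \curvearrowright \mathscr M$ by the $\ast$-automorphisms $\Ad(\lambda(\gamma))$ preserves both $\mult(\Theta)$ and $\mult(\widehat\Theta)$ (using that $\widehat\Theta$ and $\Theta$ are $\Gamma$-equivariant, which follows from the bimodule property of the multiplicative domain and $M \subset \mult(\widehat\Theta)$), and hence that $\Theta(\mult(\Theta))$ is a $\Gamma$-invariant von Neumann subalgebra of $\rL^\infty(B, \nu_B)$, so that $\Theta(\mult(\Theta)) \rtimes \Gamma \subset \mathscr B$ makes sense with the same group unitaries $\lambda(\gamma)$.

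The central step is then to apply Lemma~\ref{lem-useful1} with $\Phi = \widehat\Theta|_{\mult(\Theta)} : \mult(\Theta) \to \mathscr B$ and $\Psi = \rE_B : \mathscr B \to \rL^\infty(B, \nu_B)$. Since $\Psi \circ \Phi = \Theta|_{\mult(\Theta)}$ is a unital $\ast$-homomorphism by definition of the multiplicative domain, and since $\rE_B$ is faithful, the lemma yields two things at once: first, $\widehat\Theta|_{\mult(\Theta)}$ is a unital $\ast$-homomorphism, i.e. $\mult(\Theta) \subset \mult(\widehat\Theta)$; second, $\widehat\Theta(\mult(\Theta)) \subset \mult(\rE_B) = \rL^\infty(B, \nu_B)$. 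Because $\rE_B$ is the identity on $\rL^\infty(B, \nu_B)$, this second conclusion gives $\widehat\Theta(x) = \rE_B(\widehat\Theta(x)) = \Theta(x)$ for every $x \in \mult(\Theta)$; that is, $\Theta$ and $\widehat\Theta$ coincide on $\mult(\Theta)$, so in particular $\Theta(\mult(\Theta)) = \widehat\Theta(\mult(\Theta))$.

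Assembling the inclusion is then formal. The image $\widehat\Theta(\mult(\widehat\Theta))$ is a von Neumann subalgebra of $\mathscr B$, being the normal $\ast$-homomorphic image of the von Neumann algebra $\mult(\widehat\Theta)$. It contains $\widehat\Theta(\mult(\Theta)) = \Theta(\mult(\Theta))$ and also every group unitary $\lambda(\gamma) = \widehat\Theta(\lambda(\gamma))$ since $M \subset \mult(\widehat\Theta)$. As $\Theta(\mult(\Theta)) \rtimes \Gamma$ is by definition the von Neumann algebra generated inside $\mathscr B$ by $\Theta(\mult(\Theta))$ and $\{\lambda(\gamma) : \gamma \in \Gamma\}$, I obtain $\Theta(\mult(\Theta)) \rtimes \Gamma \subset \widehat\Theta(\mult(\widehat\Theta)) \subset \mathscr B$, which is the displayed chain.

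For the remaining statements, assume $\Theta(\mult(\Theta)) = \rL^\infty(B, \nu_B)$. Then $\Theta(\mult(\Theta)) \rtimes \Gamma = \mathscr B$, so the chain forces $\widehat\Theta(\mult(\widehat\Theta)) = \mathscr B$; hence $\widehat\Theta|_{\mult(\widehat\Theta)}$ is a surjective normal $\ast$-homomorphism, injective by faithfulness of $\widehat\Theta$, and therefore a normal $\ast$-isomorphism of $\mult(\widehat\Theta)$ onto $\mathscr B$. Its inverse $\iota : \mathscr B \to \mathscr M$ embeds $\mathscr B$ as the subalgebra $\mult(\widehat\Theta) \supset M$ and restricts to the identity on $M$ (as $\widehat\Theta|_M = \id_M$); under this identification $\iota \circ \widehat\Theta$ is the identity on $\mult(\widehat\Theta) = \iota(\mathscr B)$, so $\widehat\Theta$ is a faithful normal conditional expectation of $\mathscr M$ onto $\mathscr B$. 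I expect the only genuinely delicate points to be the identification $\mult(\rE_B) = \rL^\infty(B, \nu_B)$ and the correct bookkeeping of the two multiplicative domains; the conceptual crux — and the place where the argument could be missed — is recognizing that Lemma~\ref{lem-useful1} applies to $\widehat\Theta|_{\mult(\Theta)}$ and $\rE_B$, after which the proof is essentially mechanical.
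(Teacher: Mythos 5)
Your proof is correct and follows essentially the same route as the paper: both apply Lemma~\ref{lem-useful1} to $\Phi = \widehat\Theta|_{\mult(\Theta)}$ and $\Psi = \rE_B$ to obtain $\mult(\Theta) \subset \mult(\widehat\Theta)$, $\widehat\Theta(\mult(\Theta)) \subset \mult(\rE_B) = \rL^\infty(B, \nu_B)$ and $\widehat\Theta|_{\mult(\Theta)} = \Theta|_{\mult(\Theta)}$, then assemble the displayed chain and, in the case $\Theta(\mult(\Theta)) = \rL^\infty(B, \nu_B)$, invert the normal $\ast$-isomorphism $\widehat\Theta|_{\mult(\widehat\Theta)}$ to exhibit $\widehat\Theta$ as a faithful normal conditional expectation. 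The only difference is that you verify the standard identity $\mult(\rE_B) = \rL^\infty(B, \nu_B)$ by a Fourier-coefficient computation, which the paper simply quotes.
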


\begin{proof}
By definition, $\Theta|_{\mult(\Theta)} : \mult(\Theta) \to \rL^\infty(B, \nu_B)$ is a $\Gamma$-equivariant unital normal $\ast$-isomorphism and $\widehat\Theta|_{\mult(\widehat\Theta)} : \mult(\widehat\Theta) \to \mathscr B$ is a unital normal $\ast$-isomorphism such that $M \subset  \widehat\Theta(\mult(\widehat\Theta))\subset \mathscr B$. Since $\rE_B \circ \widehat \Theta = \Theta$ and since $\rE_B$ is faithful, Lemma \ref{lem-useful1} implies that $\widehat{\Theta} |_{\mult(\Theta)} : \mult(\Theta) \to \mathscr B$ is a unital normal $\ast$-isomorphism and $\widehat \Theta(\mult(\Theta)) \subset \mult(\rE_B)$. Then we have $\mult(\Theta) \subset \mult(\widehat\Theta)$ and so $M \vee \mult(\Theta) \subset \mult(\widehat \Theta)$. Since $\mult(\rE_B) = \rL^\infty(B, \nu_B) \subset \mathscr B$, it follows that $\widehat\Theta(\mult(\Theta)) \subset \rL^\infty(B, \nu_B) \subset \mathscr B$ and that $\Theta|_{\mult(\Theta)} = \widehat \Theta|_{\mult(\Theta)}$. Then we have $$\Theta(\mult(\Theta)) \rtimes \Gamma \subset \widehat\Theta(\mult(\widehat\Theta)) \subset \rL^\infty(B, \nu_B) \rtimes \Gamma = \mathscr B.$$

Moreover, assume that $\Theta(\mult(\Theta)) = \rL^\infty(B, \nu_B)$. Then we necessarily have $\widehat\Theta(\mult(\widehat\Theta)) = \mathscr B$. Define the unital normal $\ast$-isomorphism $\iota = (\widehat \Theta|_{\mult(\widehat \Theta)})^{-1} : \mathscr B \to \mult(\widehat\Theta)$. Then $\iota \circ \Theta : \mathscr M \to \mult(\widehat \Theta)$ is a faithful normal conditional expectation.
\end{proof}

\begin{proof}[Proof of Theorem \ref{thm-NCNS}]
Keep the same notation as above with $M = \rL(\Gamma)$ and $\mathscr B = \rL(\Gamma \curvearrowright B)$. Assume moreover that the $(\Gamma, \mu)$-Furstenberg--Poisson boundary $(B, \nu_B)$ is $\mu$-uniquely stationary. We still denote by $B$ the compact metrizable model and we assume that $B = \supp(\nu_B)$. Then the identity map $\id_{\rC(B)} : \rC(B) \to \rL^\infty(B, \nu_B)$ is the unique $\Gamma$-equivariant ucp map (see \cite[Corollary VI.2.10]{Ma91} and \cite[Theorem 3.4]{HK21}). Denote by $\lambda : \Gamma \to \mathscr U(M) : \gamma \mapsto \lambda(\gamma)$ the left regular representation. We naturally identify $\rL^2(M) = \ell^2(\Gamma)$. Denote by $M'$ the commutant of $M$ in $\rB(\rL^2(M))$. Since $M \subset \mathscr M$, we may regard $\rL^2(\mathscr M)$ as a Hilbert left $M$-module. Then \cite[Proposition 8.2.3]{AP14} implies that there exists a projection $e \in M' \ovt \rB(\ell^2)$ and a unitary mapping $V : \rL^2(\mathscr M) \to e(\rL^2(M) \otimes \ell^2)$ such that $V(a \xi) = (a \otimes 1) V(\xi)$ for every $a \in M$ and every $\xi \in \rL^2(\mathscr M)$. Then the mapping 
$$\pi : \rB(\rL^2(M)) \to \rB(\rL^2(\mathscr M)) : T \mapsto V^* e(T \otimes 1) eV$$ is a normal ucp map such that $\pi(a T b) = a \pi(T) b$ for all $T \in \rB(\rL^2(M))$ and all $a, b \in M$.

Since $\mathscr M \subset \rB(\rL^2(\mathscr M))$ is amenable, there exists a (possibly non normal) conditional expectation $\rE : \rB(\rL^2(\mathscr M)) \to \mathscr M$. Choose a point $b \in B$ and define the $\Gamma$-equivariant unital $\ast$-homomorphism $\theta : \rC(B) \to \ell^\infty(\Gamma)  : F \mapsto (\gamma \mapsto F(\gamma b))$. Regard $\ell^\infty(\Gamma) \subset \rB(\ell^2(\Gamma)) = \rB(\rL^2(M))$ and define the ucp map $\iota = \rE \circ \pi \circ \theta : \rC(B) \to \mathscr M$. Then for every $F \in \rC(B)$ and every $\gamma \in \Gamma$, we have 
$$\iota(F \circ \gamma^{-1}) = \rE(\pi(\theta(F) \circ \gamma^{-1})) = \rE(\pi( \lambda(\gamma) \theta(F) \lambda(\gamma)^\ast)) = \lambda(\gamma) \iota(F) \lambda(\gamma)^*.$$
Therefore, the ucp map $\iota : \rC(B) \to \mathscr M$ is $\Gamma$-equivariant with respect to the conjugation action $\Gamma \curvearrowright \mathscr M$.

By composition, $\Theta \circ \iota : \rC(B) \to \rL^\infty(B, \nu_B)$ is a $\Gamma$-equivariant ucp map. Then we have $\Theta \circ \iota = \id_{\rC(B)}$. Since $\Theta : \mathscr M \to \rL^\infty(B, \nu_B)$ is faithful, Lemma \ref{lem-useful1} implies that $\iota : \rC(B) \to \mathscr M$ is a unital $\ast$-homomorphism and $\iota(\rC(B)) \subset \mult(\Theta)$. Set $\psi = \nu_B \circ \Theta \in \mathscr M_\ast$ and note that $\psi$ is a $\mu$-stationary faithful normal state. Since $\psi \circ \iota = \nu_B$, we may uniquely extend $\iota : \rL^\infty(B, \nu_B) \to \mathscr M$ to a faithful normal unital $\ast$-homomorphism such that $\Theta \circ \iota = \id_{\rL^\infty(B, \nu_B)}$. Then we have $\iota(\rL^\infty(B, \nu_B)) \subset \mult(\Theta)$, $\rL^\infty(B, \nu_B) = \Theta(\iota(\rL^\infty(B, \nu_B)))$ and so $\Theta(\mult(\Theta)) = \rL^\infty(B, \nu_B)$. The moreover part follows from Lemma \ref{lem-useful2}.
\end{proof}

\begin{proof}[Proof of Corollary \ref{cor-NCNS}]
Set $M = \rL(\Gamma)$ and $\mathscr B = \rL(\Gamma \curvearrowright B)$. Assume that $\Gamma$ is infinite icc and that the $(\Gamma, \mu)$-Furstenberg--Poisson boundary $(B, \nu_B)$ is $\mu$-uniquely stationary. Denote by $\lambda : \Gamma \to \mathscr U(M) : \gamma \mapsto \lambda(\gamma)$ the left regular representation. Let $\widehat \Theta : \mathscr M \to \mathscr B$ be a normal ucp map such that $\widehat\Theta|_{M} = \id_{M}$ as given by Condition $(\rm iii)$. Denote by $p \in \mathscr M$ the support projection of $\widehat \Theta : \mathscr M \to \mathscr B$. Then for $\gamma \in \Gamma$, we have $\widehat\Theta(\lambda(\gamma) p \lambda(\gamma)^*) = \lambda(\gamma) \widehat\Theta(p) \lambda(\gamma)^* = 1$ and so $p \leq \lambda(\gamma) p \lambda(\gamma)^*$. Since this holds for every $\gamma \in \Gamma$, we infer that $p \in \rL(\Gamma)'\cap \mathscr M$. Condition $(\rm i)$ further implies that $p = 1$. Thus, $\widehat \Theta : \mathscr M \to \mathscr B$ is a faithful normal ucp map. By Condition $(\rm ii)$ and Theorem \ref{thm-NCNS}, we infer that $\mathscr B \subset \mathscr M$ can be regarded as a von Neumann subalgebra and $\widehat\Theta : \mathscr M \to \mathscr B$ as a faithful normal conditional expectation. Then Condition $(\rm iv)$ finally implies that $\mathscr B = \mathscr M$ and so $\widehat\Theta : \mathscr M \to \mathscr B$ is a unital normal onto $\ast$-isomorphism. 
\end{proof}

\begin{proof}[Proof of Theorem \ref{thm-CRC}]
Consider the inclusion $M \subset \mathscr B_1$. Since $M$ is a type ${\rm II_1}$ factor, \cite[Proposition 2.7]{DP20} implies that $M' \cap \mathscr B_1 = \C 1$. By assumption, $\mathscr B_1$ is amenable and $\Phi : \mathscr B_1 \to \mathscr B_2$ is a normal ucp map such that $\Phi|_M = \id_M$. By \cite[Theorem 4.1]{DP20}, for every intermediate von Neumann subalgebra $M \subset \mathscr N \subset \mathscr B_1$ for which there exists a normal conditional expectation $\rE : \mathscr B_1 \to \mathscr N$, we have $\mathscr N = \mathscr B_1$. We may apply Corollary \ref{cor-NCNS} to the inclusion $M \subset \mathscr B_1$ to conclude that $\Phi : \mathscr B_1 \to \mathscr B_2$ is a unital normal onto $\ast$-isomorphism. By \cite[Corollary F]{Ho21} and \cite[Theorem B]{BH22}, we infer that $\rk_{\R}(G_1) = \rk_{\R}(G_2)$.
\end{proof}

\bibliographystyle{plain}

\end{document}